\documentclass[10pt]{article}
\usepackage{dcolumn}
\usepackage{bm}
\usepackage{verbatim}       

\usepackage[dvips]{graphicx}

\usepackage{amsthm}
\usepackage{amssymb}

\usepackage{bm}
\usepackage{amstext}
\usepackage{amsmath}
\usepackage{amsfonts}
\usepackage{units}
\usepackage{mathrsfs}
%

\newcommand{\bd}{\begin{definition}}                
\newcommand{\ed}{\end{definition}}                  
\newcommand{\bc}{\begin{corollary}}                 
\newcommand{\ec}{\end{corollary}}                   
\newcommand{\bl}{\begin{lemma}}                     
\newcommand{\el}{\end{lemma}}                       
\newcommand{\bp}{\begin{proposition}}            
\newcommand{\ep}{\end{proposition}}                
\newcommand{\bere}{\begin{remark}}                  
\newcommand{\ere}{\end{remark}}                     

\newcommand{\bt}{\begin{theorem}}
\newcommand{\et}{\end{theorem}}

\newcommand{\be}{\begin{equation}}
\newcommand{\ee}{\end{equation}}

\newcommand{\bit}{\begin{itemize}}
\newcommand{\eit}{\end{itemize}}
\newtheorem{theorem}{Theorem}[section]
\newtheorem{corollary}[theorem]{Corollary}
\newtheorem{lemma}[theorem]{Lemma}
\newtheorem{proposition}[theorem]{Proposition}
\theoremstyle{definition}
\newtheorem{definition}[theorem]{Definition}
\theoremstyle{remark}
\newtheorem{remark}[theorem]{Remark}


\begin{document}

\title{Topological conditions for the representation of preorders by continuous utilities}


\author{E. Minguzzi\thanks{
Dipartimento di Matematica Applicata ``G. Sansone'', Universit\`a
degli Studi di Firenze, Via S. Marta 3,  I-50139 Firenze, Italy.
E-mail: ettore.minguzzi@unifi.it} }

\date{}

\maketitle

\begin{abstract}
We remove the Hausdorff condition from Levin's theorem on the
representation of preorders by families of continuous utilities. We
compare some alternative topological assumptions in a Levin's type
theorem, and show that they are equivalent to a Polish space
assumption.
\end{abstract}

\section{Introduction}

A topological preordered space is a triple $(E,\mathscr{T},\le)$,
where $(E,\mathscr{T})$ is a topological space endowed with a {\em
preorder} $\le$, that is, $\le$ is a reflexive and transitive
relation \cite{nachbin65}.    A function $f: E\to \mathbb{R}$ is
{\em isotone} if $x\le y$ $\Rightarrow f(x)\le f(y)$, and a {\em
utility} if it is isotone and additionally ``$x \le y$ and $y\nleq
x$ $\Rightarrow f(x)< f(y)$''.

In this work we wish to establish sufficient topological conditions
on $(E,\mathscr{T})$ for the representability of the preorder
through the family $\mathcal{U}$ of continuous utility functions
with value in $[0,1]$. That is, we look for topological conditions
that imply the validity of the following property
\[
x\le y \Leftrightarrow \forall f\in \mathcal{U}, f(x)\le f(y).
\]
Economists have long been  interested in the representation of
preorders by utility functions \cite{bridges95}. More recently, this
mathematical problem has found application in other fields  such as
spacetime physics \cite{minguzzi09c} and dynamical systems
\cite{akin10b}.

To start with, it will be convenient to recall some notions from the
theory of topological preordered spaces \cite{nachbin65}. A {\em
semiclosed preordered space} $E$ is a topological preordered space
such that, for every point $x\in E$, the increasing hull
$i(x)=\{y\in E: x\le y\}$ and the decreasing hull $d(x)=\{y: y \le
x\}$, are closed. A {\em closed preordered space} $E$ is a
topological preordered space endowed with a {\em closed preorder},
that is, the graph $G(\le)=\{(x,y): x\le y\}$ is closed in the
product topology on $E\times E$.

 Let $E$ be a topological preordered space. A subset $S\subset E$ is called {\em increasing} if $i(S)=S$ and
{\em decreasing} if $d(S)=S$, where $i(S)=\bigcup_{s\in S} i(s)$ and
analogously for $d(S)$. A subset $S\subset E$ is {\em convex} if it
is the intersection of an increasing and a decreasing set, in which
case we have $S=i(S)\cap d(S)$.

A topological preordered space $E$ is {\em convex} if for every
$x\in E$, and open set $O\ni x$, there are an open decreasing set
$U$ and an open increasing  set $V$ such that $x\in U\cap V\subset
O$. Notice that according to this terminology the statement ``the
topological preordered space $E$ is convex'' differs from the
statement ``the subset $E$ is convex'' (which is always true). The
terminology is not uniform in the literature, for instance Lawson
\cite{lawson91} calls {\em strongly order convexity} what we call
convexity. The topological preordered space $E$ is {\em locally
convex} if for every point $x\in E$, the set of convex neighborhoods
of $x$ is a base for the neighborhoods system of $x$
\cite{nachbin65}. Clearly, convexity implies local convexity.

A topological preordered space  is a {\em normally preordered space}
if it is semiclosed preordered and for every closed decreasing set
$A$ and closed increasing set $B$ which are disjoint, it is possible
to find an open decreasing set $U$ and an open increasing set $V$
which separate them, namely $A\subset U$, $B\subset V$, and $U\cap
V=\emptyset$.

A {\em regularly preordered space} is a semiclosed preordered space
such that if $x \notin B$, where $B$ is a closed increasing set,
then there is an open decreasing set $U\ni x$ and an open increasing
set $V\supset B$, such that $U\cap V=\emptyset$, and analogously, a
dual property must hold for $y \notin A$ where $A$ is a closed
decreasing set.

We have the implications: normally preordered space $\Rightarrow$
regularly preordered space $\Rightarrow$  closed preordered space
$\Rightarrow$ semiclosed preordered space.

For normally preordered spaces a natural generalization  of
Urysohn's lemma holds \cite[Theor. 1]{nachbin65}: If $A$ and $B$ are
respectively a closed decreasing set and a closed increasing set
such that $A\cap B=\emptyset$, then there is a continuous isotone
function $f: E\to [0,1]$ such that $A\subset f^{-1}(0)$ and
$B\subset f^{-1}(1)$.

A trivial and well known consequence of this fact is (take $A=d(y)$
and $B=i(x)$ with $x\nleq y$)

\begin{proposition}
Let $E$ be a normally preordered space and let $\mathcal{I}$ be the
family of continuous isotone functions with value in $[0,1]$, then
\begin{equation} \label{mbi}
x\le y \Leftrightarrow \forall f\in \mathcal{I}, f(x)\le f(y).
\end{equation}
\end{proposition}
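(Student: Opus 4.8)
The plan is to prove the two implications of \eqref{mbi} separately, with the non‑trivial direction reduced to an application of Nachbin's generalization of Urysohn's lemma quoted just above the statement.

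The implication $\Rightarrow$ requires no topology: if $x\le y$ then, since every $f\in\mathcal{I}$ is in particular isotone, $f(x)\le f(y)$ directly from the definition of isotone function.

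For the converse I would argue by contraposition, so assume $x\nleq y$ and exhibit some $f\in\mathcal{I}$ with $f(x)>f(y)$. Following the hint, set $A=d(y)$ and $B=i(x)$. First, $A$ is decreasing and $B$ is increasing: this is immediate from transitivity of $\le$, which gives $d(d(y))=d(y)$ and $i(i(x))=i(x)$. Second, $A$ and $B$ are closed, because a normally preordered space is by definition semiclosed preordered. Third, $A\cap B=\emptyset$: any $z\in d(y)\cap i(x)$ would satisfy $x\le z$ and $z\le y$, hence $x\le y$ by transitivity, contradicting the assumption. Now $A$ is a closed decreasing set and $B$ a closed increasing set with $A\cap B=\emptyset$, so the cited Urysohn‑type lemma for normally preordered spaces provides a continuous isotone $f\colon E\to[0,1]$ with $A\subset f^{-1}(0)$ and $B\subset f^{-1}(1)$. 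By reflexivity $y\in d(y)=A$ and $x\in i(x)=B$, whence $f(y)=0$ and $f(x)=1$. Thus $f(x)\le f(y)$ fails for this $f\in\mathcal{I}$, which is exactly the contrapositive of the desired implication.

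There is no real obstacle in this argument: the only points deserving a moment's attention are that $d(y)$, $i(x)$ are genuinely decreasing/increasing (transitivity), that they are closed (the semiclosedness built into the hypothesis), and that their disjointness is literally a restatement of $x\nleq y$. Once these are in place, all the work is done by the already‑established generalization of Urysohn's lemma.
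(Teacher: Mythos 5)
Your proof is correct and follows exactly the route the paper indicates: the forward direction is immediate from isotonicity, and the converse is the parenthetical hint ``take $A=d(y)$ and $B=i(x)$ with $x\nleq y$'' fleshed out via the order-theoretic Urysohn lemma. The verifications you supply (closedness from semiclosedness, monotonicity of the hulls from transitivity, disjointness from $x\nleq y$) are precisely the details the paper leaves implicit.
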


This result almost solves our original problem but for the fact that
the family of continuous utility functions is replaced by the larger
family of continuous isotone functions. Moreover, we have  still to
identify  some topological conditions on $(E,\mathscr{T})$ in order
to guarantee that $E$ is a normally preordered space. It is worth
noting that Eq. (\ref{mbi}) is one of the two conditions which
characterize the {\em completely regularly preordered spaces}
\cite{nachbin65}.

Let us recall that a $k_\omega$-space is a topological space
characterized through the following property \cite{franklin77}:
there is a countable ({\em admissible}) sequence $K_i$  of compact
sets   such that $\bigcup_{i=1}^{\infty}K_i= E$ and for every subset
$O\subset E$, $O$ is open if and only if $O\cap K_i$ is open in
$K_i$ for every $i$ (here $E$ is not required to be Hausdorff).

Recently,  the author  proved the following results
\cite{minguzzi11f}

\begin{theorem} \label{bhp}
Every  $k_\omega$-space equipped with a closed preorder  is a
normally preordered space.
\end{theorem}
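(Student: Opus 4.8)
The plan is to verify the two halves of the normally preordered space definition separately: first that a $k_\omega$-space with closed preorder is semiclosed preordered, and then the main separation property. The first half is immediate: if $\le$ is a closed preorder, then for each $x$ the set $i(x) = \{y : (x,y) \in G(\le)\}$ is the preimage of the closed set $G(\le)$ under the continuous map $y \mapsto (x,y)$, hence closed, and dually for $d(x)$. So the real content is the separation property.

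For the separation property, let $A$ be closed decreasing and $B$ closed increasing with $A \cap B = \emptyset$. Let $K_i$ be an admissible sequence of compact sets covering $E$. My first instinct is to build the separating open decreasing set $U$ and open increasing set $V$ by an inductive exhaustion argument: I would construct increasing chains of sets $U_1 \subset U_2 \subset \cdots$ and $V_1 \subset V_2 \subset \cdots$ such that at stage $n$ the pair $(U_n, V_n)$ already separates $A \cap K_n$ from $B \cap K_n$ appropriately inside a neighborhood of $K_n$, and then take unions. The key point making this feasible is that each $K_n$ is compact, and on a compact space one expects a Urysohn-type separation for closed disjoint increasing/decreasing sets to be available — so the strategy is to first establish the theorem for compact spaces (a compact space with closed preorder is a normally preordered space, which is essentially Nachbin's result), then bootstrap to the $k_\omega$ case via the exhaustion. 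At each stage one separates the relevant compact pieces, but one must be careful that the decreasing/increasing hulls of the already-constructed sets do not eventually bump into $B$ or $A$ respectively; here the closedness of the preorder and of $A$, $B$ should give enough room, perhaps by thickening slightly and using that $i(A\cap K_n)$ stays away from $B$.

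The hard part will be controlling the inductive step so that the unions $U = \bigcup U_n$ and $V = \bigcup V_n$ are genuinely open in the $k_\omega$-topology — i.e., that $U \cap K_i$ is open in $K_i$ for every $i$, not just for $i = n$ at the stage where $U_n$ was built. This requires that once a point near $K_i$ has been put into $U$ (or deliberately excluded), later stages do not spoil its neighborhood; so the construction must be ``locally eventually stable'' on each $K_i$. A natural device is to arrange $\overline{U_n} \subset U_{n+1}$ (and similarly for $V$), decreasing/increasing at each stage, so that $U$ restricted to the compact set $K_i$ agrees with $U_{n}\cap K_i$ for all large $n$ once one has absorbed $K_i$; combined with the compactness of $K_i$ this forces openness. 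The disjointness $U \cap V = \emptyset$ should be maintained throughout the induction as an invariant, and isotonicity (decreasing/increasing) of the unions is automatic since a union of decreasing sets is decreasing. An alternative, possibly cleaner, route is to use the Urysohn-type lemma on each compact $K_n$ to get continuous isotone functions $f_n : K_n \to [0,1]$ separating the compact pieces, extend/glue them compatibly using the $k_\omega$ structure (which makes a function on $E$ continuous iff its restriction to each $K_i$ is continuous), and read off $U$, $V$ from a single limiting continuous isotone function $f$ with $f \equiv 0$ on $A$, $f \equiv 1$ on $B$; the gluing compatibility of the $f_n$ is then the main obstacle, requiring a careful diagonal/uniform-convergence argument on the nested compacta.
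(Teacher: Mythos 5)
The paper does not actually prove Theorem \ref{bhp}; it imports it from \cite{minguzzi11f}, and its own footnote warns that the non-Hausdorff generalization ``requires some care in the reformulation and generalization of some extendibility results.'' Your outline does point at the right overall architecture (reduce to the compact case, then exhaust by the admissible sequence $K_i$ and use the $k_\omega$ property to get continuity/openness of the limit object), and your reduction of semiclosedness to closedness of $G(\le)$ is correct. But as a proof it has a genuine gap: both load-bearing ingredients are asserted rather than established. First, ``a compact space with closed preorder is a normally preordered space'' is not quite Nachbin's result --- Nachbin's theorem concerns compact \emph{Hausdorff} spaces with a closed \emph{order}, and a compact space with a closed preorder that is not an order is never Hausdorff, so this base case itself needs a (non-trivial, though doable) proof. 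Second, and more seriously, the gluing step is exactly where the theorem lives. In your first variant, a set that is decreasing and relatively open in $K_n$ need not be the trace of an open decreasing subset of $E$, and producing an open decreasing set of $E$ around $A\cap K_n$ disjoint from an open increasing set around $B\cap K_n$ is essentially the normality you are trying to prove; the induction cannot get started without an extension mechanism. In your second variant, the needed mechanism is an isotone Tietze--Nachbin extension lemma: a continuous isotone function on the compact subspace $K_n$ does \emph{not} in general extend to a continuous isotone function on $K_{n+1}$ unless its level sets satisfy a separation condition in $K_{n+1}$, and arranging the induction so that this condition holds at every stage (while keeping $f\equiv 0$ on $A$ and $f\equiv 1$ on $B$) is the actual content of the proof in \cite{minguzzi11f}. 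You have correctly located the obstacle, but locating it is not the same as overcoming it.

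If you want to complete the argument along your second route, the steps to supply are: (a) prove that a compact closed preordered space is normally preordered (separate $A$ and $B$ by covering $B$ with finitely many open increasing sets whose closures miss $d$ of suitable neighborhoods of $A$, using closedness of $G(\le)$ and compactness of $E$); (b) prove the extension lemma: if $E$ is a normally preordered space, $K\subset E$ closed, and $g:K\to[0,1]$ continuous and isotone with the property that for all $s<t$ the sets $d\bigl(g^{-1}([0,s])\bigr)$ and $i\bigl(g^{-1}([t,1])\bigr)$ can be separated, then $g$ extends to a continuous isotone function on $E$; (c) verify in the inductive construction on $K_n\subset K_{n+1}$ that the function built at stage $n$ satisfies the hypothesis of (b) inside $K_{n+1}$, which is where closedness of the preorder and of $A$, $B$ enter. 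Only after (a)--(c) does the $k_\omega$ property finish the job by making the common extension $f$ continuous on $E$ and $U=f^{-1}([0,\tfrac12))$, $V=f^{-1}((\tfrac12,1])$ the required open decreasing and increasing sets.
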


\begin{theorem} \label{bhb}
Every second countable regularly preordered space admits a countable
continuous  utility representation, that is, there is a countable
set $\{f_k\}$ of continuous utility functions $f_k:E\to [0,1]$ such
that
\[
x\le y \Leftrightarrow \forall k, f_k(x)\le f_k(y).
\]

\end{theorem}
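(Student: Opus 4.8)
The plan is to proceed in three stages: upgrade the hypothesis to normality, build a countable family of continuous \emph{isotone} functions that represents $\le$, and then replace these by \emph{utilities}. For the first stage I would prove that a second countable regularly preordered space is normally preordered; this is the preordered analogue of the classical fact that a regular Lindel\"of space is normal, second countability supplying the Lindel\"of property. Given disjoint closed sets $A$ (decreasing) and $B$ (increasing), regularity furnishes, for each $a\in A$, an open decreasing $U_a\ni a$ and an open increasing $V_a\supseteq B$ with $U_a\cap V_a=\emptyset$, so that $C_a:=E\setminus V_a$ is a closed decreasing set with $U_a\subseteq C_a$ and $C_a\cap B=\emptyset$; dually one obtains, for each $b\in B$, an open increasing $W_b\ni b$ contained in a closed increasing set $D_b$ disjoint from $A$. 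Since $A$ and $B$ are closed subsets of a Lindel\"of space they are Lindel\"of, so countably many $U_{a_i}$ cover $A$ and countably many $W_{b_j}$ cover $B$; then $U:=\bigcup_i\big(U_{a_i}\setminus\bigcup_{j\le i}D_{b_j}\big)$ and $V:=\bigcup_j\big(W_{b_j}\setminus\bigcup_{i\le j}C_{a_i}\big)$ are open decreasing, respectively open increasing, sets separating $A$ and $B$. One cannot replace $C_a$ by $\overline{U_a}$ here, because the closure of a decreasing set need not be decreasing.

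With normality in hand, the generalized Urysohn lemma of Nachbin applies. Fix a countable base $\{B_n\}$ of $E$ and let $\mathcal P\subseteq\mathbb N\times\mathbb N$ be the set of pairs $(n,m)$ for which there exists a continuous isotone $h:E\to[0,1]$ with $h\equiv0$ on $d(B_m)$ and $h\equiv1$ on $i(B_n)$; for each such pair choose one witness $h_{nm}$. The resulting countable family represents $\le$: isotonicity gives the forward implication, and for the converse I would show that $x\nleq y$ forces $h_{nm}(x)>h_{nm}(y)$ for some $(n,m)\in\mathcal P$. Indeed $y\notin i(x)$ while $i(x)$ is closed increasing, so regularity gives an open decreasing $U\ni y$ and an open increasing $V\supseteq i(x)$ with $U\cap V=\emptyset$; then $C:=E\setminus V$ is closed decreasing and disjoint from $i(x)$, and Nachbin's lemma yields a continuous isotone $g$ with $g\equiv0$ on $C$ and $g\equiv1$ on $i(x)$. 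Pick basic sets with $y\in B_m\subseteq U$ and $x\in B_n\subseteq g^{-1}\big((1/2,1]\big)$; then $\min(2g,1)$ is a continuous isotone map equal to $0$ on $d(B_m)$ and to $1$ on $i(B_n)$, so $(n,m)\in\mathcal P$, and consequently $h_{nm}(y)=0<1=h_{nm}(x)$ because $y\in B_m\subseteq d(B_m)$ and $x\in B_n\subseteq i(B_n)$.

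For the last stage, relabel the family as a sequence $\{h_k\}_{k\ge1}$ and put $f:=\sum_{k\ge1}2^{-k}h_k$. The series converges uniformly, so $f$ is continuous; it is clearly isotone, and it is a utility because if $x\le y$ and $y\nleq x$ then $h_k(x)\le h_k(y)$ for all $k$ while $h_j(x)<h_j(y)$ for at least one $j$ (otherwise $y\le x$), whence $f(x)<f(y)$. I claim the countable family $\{\tfrac12(h_k+\tfrac1n f):k,n\ge1\}$ works: each of these functions takes values in $[0,1]$ and is a continuous utility, since adding a positive multiple of the utility $f$ to an isotone function yields a utility. It represents $\le$: the forward implication is clear, and if $\tfrac12\big(h_k(x)+\tfrac1n f(x)\big)\le\tfrac12\big(h_k(y)+\tfrac1n f(y)\big)$ for all $k$ and all $n$, then $h_k(x)-h_k(y)\le\tfrac1n\big(f(y)-f(x)\big)$ for every $n\ge1$; letting $n\to\infty$ gives $h_k(x)\le h_k(y)$ for all $k$, hence $x\le y$ since $\{h_k\}$ already represents $\le$.

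The heart of the matter, and the step I expect to require the most care, is this last conversion from isotone functions to utilities: the single utility $f$ by itself does not represent $\le$, and naive fixes such as $\tfrac12(h_k+f)$ recover $h_k(x)\le h_k(y)$ only in the degenerate case $f(x)=f(y)$; letting the coefficient of $f$ run over $\{1/n:n\ge1\}$ and passing to the limit is what makes the argument close. Stages one and two are adaptations of standard arguments, the only delicate point being that increasing and decreasing sets are not stable under closure, which forces the separating closed sets to be produced as complements of open increasing (resp.\ decreasing) sets rather than as closures.
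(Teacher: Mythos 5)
Your proof is correct. Note that the paper itself does not prove this theorem: it only states it and cites \cite{minguzzi11f}, so the comparison is with the argument there, which follows the same three-stage route you take (regularly preordered $+$ Lindel\"of $\Rightarrow$ normally preordered by the Tychonoff-style countable shrinking argument; a countable base together with Nachbin's order-Urysohn lemma yields a countable isotone representation; the isotone family is then converted into a utility family via the auxiliary utility $f=\sum_k 2^{-k}h_k$). All three of your stages check out: the sets $U_{a_i}\setminus\bigcup_{j\le i}D_{b_j}$ are indeed open and decreasing (a decreasing set minus an increasing set is decreasing), the disjointness of $U$ and $V$ follows from the index comparison exactly as you indicate, and your stage-two separation argument correctly exploits that $g^{-1}((1/2,1])$ is increasing for isotone $g$. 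One remark on stage three: you are right that $\tfrac12(h_k+f)$ does not recover the representation from an arbitrary isotone family (one can arrange $h_j(x)>h_j(y)$ while $(h_k+f)(x)\le(h_k+f)(y)$ for all $k$), and your $\tfrac12(h_k+\tfrac1n f)$ family with $n\to\infty$ is a clean and robust fix. However, the functions your stage two actually produces satisfy $h_{nm}(y)=0$ and $h_{nm}(x)=1$ whenever $x\nleq y$, and with this exact $\{0,1\}$-separation the simpler family $\tfrac12(h_k+f)$ already works, since $h_k(x)-h_k(y)=1$ cannot be dominated by $f(y)-f(x)<1$; this is essentially the shortcut used in the cited sources. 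Your version buys generality (it converts \emph{any} countable isotone representation into a utility representation) at the cost of a doubly indexed family, and either choice is acceptable.
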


Using the previous results we obtain the following improvement of
Levin's theorem.\footnote{These references do not consider the
representation problem but rather the existence of just one
continuous utility. Nevertheless, the argument for the existence of
the whole representation is contained at the end of the proof of
\cite[Lemma 8.3.4]{bridges95}.}$^{\!,}$\footnote{The result
\cite[Theor. 1]{evren09} should not be confused with this one, since
their definition of utility differs from our. That theorem can
instead be deduced from the stronger theorem \ref{bhp}. Also note
that in their proof they tacitly use a $k_\omega$-space assumption
which can nevertheless be justified.} \cite{levin83} \cite[Lemma
8.3.4]{bridges95}

\begin{corollary}  \label{bsc}
Let $(E,\mathscr{T},\le)$ be a  second-countable $k_\omega$-space
equipped with a closed preorder, then there is a countable family
$\{u_k\}$ of continuous utility functions $u_k: E\to [0,1]$ such
that
\[
x \le y \ \Leftrightarrow \ \forall k, \ u_k(x)\le u_k(y) .
\]
\end{corollary}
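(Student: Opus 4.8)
The plan is to obtain Corollary~\ref{bsc} as an immediate consequence of the two theorems quoted just above it. The only thing that needs checking is that the hypotheses chain through correctly.

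First I would invoke Theorem~\ref{bhp}. By assumption $(E,\mathscr{T},\le)$ is a $k_\omega$-space equipped with a closed preorder, so Theorem~\ref{bhp} applies verbatim and tells us that $E$ is a normally preordered space. In particular $E$ is a regularly preordered space, since we are given the implication ``normally preordered space $\Rightarrow$ regularly preordered space'' in the excerpt.

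Next I would bring in the second-countability hypothesis. Since $E$ is second countable (by assumption) and regularly preordered (by the previous step), Theorem~\ref{bhb} applies and yields a countable set $\{u_k\}$ of continuous utility functions $u_k:E\to[0,1]$ with
\[
x\le y \ \Leftrightarrow \ \forall k,\ u_k(x)\le u_k(y),
\]
which is exactly the assertion of the corollary.

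I do not expect any genuine obstacle here: the corollary is a packaging of Theorems~\ref{bhp} and \ref{bhb}, and the ``hard work'' resides in those theorems (proved in \cite{minguzzi11f}), not in the deduction. The only point that requires a word of care is the logical routing --- one must apply the normality theorem \emph{first} to upgrade ``closed preorder'' to ``regularly preordered'', and only then apply the representation theorem, which needs regularity (not merely a closed preorder) together with second countability. A remark worth adding is that this is precisely the place where the Hausdorff hypothesis of Levin's original statement has been dropped: neither Theorem~\ref{bhp} nor Theorem~\ref{bhb} presupposes Hausdorffness, and the $k_\omega$ and second-countability conditions together are what replace it.
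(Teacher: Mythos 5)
Your proposal is correct and coincides with the paper's own proof: Theorem~\ref{bhp} upgrades the closed preordered $k_\omega$-space to a normally (hence regularly) preordered space, and Theorem~\ref{bhb} then applies using second countability. The point you single out about the order of application is exactly the routing the paper uses.
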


\begin{proof}
Every closed preordered $k_\omega$-space  is a normally preordered
space (Theor. \ref{bhp}). Since $E$ is a second countable regularly
preordered space it admits a countable continuous utility
representation (Theor. \ref{bhb}).
\end{proof}

With respect to the references we have removed the Hausdorff
condition.\footnote{In \cite{herden02} it was first suggested that
the Hausdorff condition could be removed. This generalization is non
trivial and  requires some care in the reformulation and
generalization of some extendibility results \cite{minguzzi11f}.}
Another interesting improvement can be found in \cite{caterino09}.
In the remainder of the work we wish to compare this result with
other reformulations which use different topological assumptions.

%
%

\subsection{Topological preliminaries}

Since in this work we do not assume Hausdorffness of $E$ it is
necessary to clarify that in our terminology a topological space is
{\em locally compact} if every point admits a compact neighborhood.


\begin{definition} \label{hem}
A topological space $(E,\mathscr{T})$ is {\em hemicompact} if there
is a countable sequence $K_i$, called {\em admissible}, of compact
sets such that
 every compact set is contained
in some $K_i$ (since points are compact we have
$\bigcup_{i=1}^{\infty}K_i= E$, and without loss of generality we
can assume $K_i \subset K_{i+1}$).
\end{definition}

The following facts are well known  (Hausdorffness is not required).
Every compact set is hemicompact and every hemicompact set is
$\sigma$-compact. Every locally compact Lindel\"of space is
hemicompact,  and every first countable hemicompact space is locally
compact.\footnote{In order to prove the last claim,  modify slightly
the proof given in \cite[p. 486]{arens46} replacing ``Suppose no
neighborhood $V_i$ has a compact closure'' with ``Suppose $x$ has no
compact neighborhood''.}${}^{,}$\footnote{A first countable
Hausdorff hemicompact $k$-space space need not be second countable.
Indeed, as stressed in \cite{franklin77} not even compactness is
sufficient as the unit square with a suitable topology provides a
counterexample \cite[p. 73]{steen70}.}
%
%
%
%
%
%

%
%
%
%

\begin{definition}
A topological space $E$ is a {\em k-space}  if for every subset
$O\subset E$, $O$ is open if and only if, for every compact set
$K\subset E$, $O\cap K$ is open in $K$.
\end{definition}

We remark that we use the definition given in \cite{willard70} and
so do not include Hausdorffness in the definition as done in
\cite[Cor. 3.3.19]{engelking89}.


Every  first countable or locally compact space is a
$k$-space.\footnote{Modify slightly the proof in \cite[Theor.
43.9]{willard70}} Thus under second countability ``hemicompact
$k$-space'' is equivalent to local compactness.


It is easy to prove that an hemicompact $k$-space is a
$k_{\omega}$-space and the converse can be proved under $T_1$
separability (see \cite[Lemma 9.3]{steenrod67}). Further, in an
hemicompact $k$-space every admissible sequence $K_i$, $K_i\subset
K_{i+1}$, in the sense of the hemicompact definition is also an
admissible sequence in the sense of the $k_\omega$-space definition.
The mentioned results imply the chain of implications
\begin{center}
compact $\Rightarrow$ hemicompact $k$-space $\Rightarrow$
$k_\omega$-space $\Rightarrow$ $\sigma$-compact $\Rightarrow$
Lindel\"of
\end{center}
and the fact that local compactness makes the last four properties
coincide.

A continuous function $f: X \to Y$ between topological spaces is
said to be  a {\em quasi-homeomorphism} if the following conditions
are satisfied \cite{kaiwing72,echi09}:
\begin{itemize}
\item[(i)] For any closed set $C$ in $X$, $f^{-1}(\overline{f(C)})= C$.
\item[(ii)] For any closed set $F$ in $Y$, $\overline{f(f^{-1}(F))}= F$.
\end{itemize}
Every quasi-homeomorphism establishes a bijective correspondence
$\psi_f: CL(Y)\to CL(X)$ between the closed sets of $Y$ and $X$
through the definition $\psi_f(C)=f^{-1}(C)$.


\begin{remark} \label{mbo}
If $f$ is surjective (ii) is satisfied. Furthermore, a quotient
(hence surjective) map which satisfies $f^{-1}(f(C))=C$ for every
closed set $C$ (or equivalently, for every open set) is a
quasi-homeomorphism. Indeed, if $C$ is closed then $f(C)$ is closed,
because of the identity $f^{-1}(f(C))=C$ and the definition of
quotient topology. Thus both properties (i)-(ii) hold, and $f$ is a
quasi-homeomorphism. The given argument also shows that $f$ is
closed  (and open). Furthermore, it can be shown that a
quasi-homeomorphism is surjective if and only if it is closed, if
and only if it is open \cite[Prop. 2.4]{echi09}.
\end{remark}

%
%
%
%
%
%
%
%
%


\section{Ordered quotient and local convexity}

On a topological preordered space $E$ the relation $\sim$, defined
by $x\sim y$ if $x\le y$ and $y\le x$, is an equivalence relation.
Let $E/\!\!\sim$ be the quotient space, $\mathscr{T}/\!\!\sim$ the
quotient topology, and let $\lesssim$ be defined by, $[x]\lesssim
[y]$ if $x\le y$ for some representatives (with some abuse of
notation we shall denote with $[x]$ both a subset of $E$ and a point
on $E/\!\!\sim$). The quotient preorder is by construction an order.
The triple $(E/\!\!\sim,\mathscr{T}/\!\!\sim,\lesssim)$ is a
topological ordered space and $\pi: E\to E/\!\!\sim$ is the
continuous quotient projection.

\begin{remark} \label{dks}
Taking into account the definition of quotient topology we have that
every open (closed) increasing  set on $E$ projects to an open
(resp. closed) increasing set on $E/\!\!\sim$ and all the latter
sets can be regarded as such projections. The same holds replacing
{\em increasing} by {\em decreasing}. As a consequence,
$(E,\mathscr{T},\le)$ is a normally preordered space (semiclosed
preordered space, regularly preordered space) if and only if
$(E/\!\!\sim,\mathscr{T}/\!\!\sim,\lesssim)$ is a normally ordered
space (resp. semiclosed ordered space, regularly ordered space). The
effect of the quotient $\pi: E\to E/\!\!\sim$ on the topological
preordered properties has been studied in \cite{kunzi05}.
\end{remark}

\begin{remark}
A set $S\subset E$ is convex if and only if $\pi(S)$ is convex.
Indeed, let $U$ and $V$ be respectively  decreasing and increasing
sets, we have $\pi(U\cap V)=\pi(U)\cap \pi(V)$ because: $U\cap
V\subset \pi^{-1}(\pi(U\cap V))\subset \pi^{-1}(\pi(U)\cap \pi(V))=
\pi^{-1}(\pi(U))\cap \pi^{-1}(\pi(V))=U\cap V $.
\end{remark}

%
%
%

\begin{proposition} \label{gvh} Let $(E,\mathscr{T},\le)$ be a topological
preordered space. If local convexity holds at $x\in E$ then $[x]$ is
compact and every open neighborhood of $x$ is also an open
neighborhood of $[x]$. If $E$ is locally convex then every open set
is saturated with respect to $\pi$ (that is $\pi^{-1}(\pi(O))=O$ for
every open set $O$). Hence $\pi$ is a (surjective)
quasi-homeomorphism, in particular $\pi$ is open, closed and proper.
\end{proposition}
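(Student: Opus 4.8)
The plan is to establish the statement in the order the conclusions are listed, starting from local convexity at a single point $x$. First I would show that $[x]$ is compact. Since $E$ is locally convex at $x$, the collection of convex neighborhoods of $x$ is a neighborhood base. Given any open cover of $[x]$, I would pick for each $y \in [x]$ an open set $O_y$ from the cover containing $y$, and then use local convexity at $x$ (noting $y \sim x$, so a convex neighborhood of $x$ is related to a convex neighborhood of $y$) to extract a convex neighborhood $U \cap V$ of $x$ with $U$ open decreasing, $V$ open increasing. The key observation is that any open decreasing set containing $x$ must contain all of $d(x) \supset [x]$, and any open increasing set containing $x$ must contain all of $i(x) \supset [x]$; hence $U \cap V \supset i(x) \cap d(x) \supset [x]$. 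This already shows a single convex neighborhood of $x$ swallows $[x]$, which gives both compactness of $[x]$ (trivially, from one set in the cover plus convexity — actually the cleanest phrasing is: $[x]$ is contained in every convex neighborhood of $x$, and by local convexity the convex neighborhoods have empty-ish intersection behavior... let me instead argue compactness directly below) and the claim that every open neighborhood $O$ of $x$ contains a convex neighborhood $U\cap V$ which contains $[x]$, so $O \supset [x]$, i.e. $O$ is a neighborhood of $[x]$.

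For compactness of $[x]$: I would observe that if $W$ is any open set meeting $[x]$, say $z \in W \cap [x]$, then by local convexity at $z$ (which holds because $z \sim x$ and local convexity is a property invariant under $\sim$, or more directly because any neighborhood structure at $z$ mirrors that at $x$) there is a convex neighborhood of $z$ inside $W$, and by the swallowing argument above that convex neighborhood contains $[z] = [x]$. Thus every open set that meets $[x]$ contains $[x]$; equivalently, $[x]$ carries the indiscrete topology as a subspace, hence is compact. (Alternatively: the subspace $[x]$ has the property that its only nonempty open subset is itself, so any open cover has a member equal to $[x]$.)

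Next, assuming $E$ is locally convex everywhere, I would show every open set $O$ is saturated. Take $x \in O$; by the first part $O$ is an open neighborhood of $[x]$, so $[x] \subset O$. Since this holds for every $x \in O$, we get $\pi^{-1}(\pi(O)) = \bigcup_{x\in O} [x] \subset O$, and the reverse inclusion is automatic; hence $\pi^{-1}(\pi(O)) = O$. Then $\pi$ is a quotient map (by construction) satisfying $\pi^{-1}(\pi(O)) = O$ for every open set $O$, equivalently for every closed set; by Remark \ref{mbo} this makes $\pi$ a quasi-homeomorphism which is moreover open and closed. Properness then follows: $\pi$ is closed, and the fibers $\pi^{-1}([x]) = [x]$ are compact by the first part, which is precisely the definition of a proper map.

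The main obstacle I anticipate is the "swallowing" lemma — making rigorous the claim that an open decreasing (resp. increasing) set containing a point $x$ contains all of $d(x)$ (resp. $i(x)$), and correctly handling the interplay with the equivalence class: one must be careful that local convexity is used at the right point and that convex neighborhoods of $x$ genuinely contain $i(x)\cap d(x)$. Once that elementary set-theoretic fact about increasing/decreasing sets is pinned down, everything else is bookkeeping, with the only other subtlety being the appeal to Remark \ref{mbo} to upgrade the saturation property to the quasi-homeomorphism, open, and closed conclusions, and the standard fact that a closed map with compact fibers is proper.
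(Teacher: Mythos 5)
Your overall route is the paper's: show that a convex neighborhood of $x$ swallows $[x]$, deduce that every open neighborhood of $x$ contains $[x]$, get saturation of open sets, and then invoke Remark \ref{mbo} to obtain the quasi-homeomorphism, openness, closedness and properness. Those parts are correct. Two small remarks: under \emph{local} convexity the convex neighborhood $C$ need not have the form $U\cap V$ with $U,V$ \emph{open}; it is only $A\cap B$ with $A$ decreasing and $B$ increasing (and $C$ a not-necessarily-open neighborhood). This does not matter, since your swallowing argument never uses openness: $x\in A$ decreasing gives $d(x)\subset A$, $x\in B$ increasing gives $i(x)\subset B$, hence $[x]=i(x)\cap d(x)\subset C$. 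Also, "closed map with compact fibers is proper" is a theorem rather than "precisely the definition", but it is a standard one and your use of it is fine (the paper instead notes that saturation of open sets makes properness immediate).

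The genuine problem is your compactness argument. You claim that local convexity at $x$ implies local convexity at every $z\sim x$, and conclude that every open set meeting $[x]$ contains $[x]$, i.e.\ that $[x]$ carries the indiscrete subspace topology. Both claims are false. Take $E=\{x,z\}$ with $x\le z\le x$ and topology $\{\emptyset,\{z\},E\}$. The only increasing or decreasing nonempty set is $E$, so the only nonempty convex set is $E$; local convexity holds at $x$ (whose only neighborhood is $E$) but fails at $z$ (no convex neighborhood of $z$ sits inside $\{z\}$), and the open set $\{z\}$ meets $[x]=E$ without containing it. Local convexity is not invariant under $\sim$: the first part of the proposition gives you that every neighborhood of $x$ is a neighborhood of $z$, but not the converse. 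Fortunately the compactness you need follows from what you have already proved, by the argument you started and then abandoned in your parenthetical: given any open cover of $[x]$, some member $W$ contains the point $x$ itself; being an open neighborhood of $x$, $W$ contains all of $[x]$, so a single member of the cover suffices. Replace your "direct" argument by this one and the proof is complete.
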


\begin{proof}
Let $O$ be an open neighborhood of $x$ and let $C$ be a convex set
such that $x\in C\subset O$, then $[x]=d(x)\cap i(x)\subset d(C)\cap
i(C)=C \subset O$, thus $O$ is also an open neighborhood for $[x]$.
The compactness of $[x]$ follows easily.

Let $O\subset E$ be an open set and let $x\in O$. We have already
proved that $[x]\subset O$. Since this is true for every $x\in O$,
we have $\pi^{-1}(\pi(O))=O$. Therefore, by remark \ref{mbo}, since
$\pi$ is a quotient map it is  a quasi-homeomorphism which is open
and closed. Every such map is easily seen to be proper.
\end{proof}

%
%



\begin{remark} \label{knx}
By the previous result under local convexity the quotient $\pi$
establishes a bijection between the respective families in $E$ and
$E/\!\!\sim$ of open sets, closed sets, compact sets, increasing
sets, decreasing sets and convex sets. Continuous isotone functions
on $E$ pass to the quotient on $E/\!\!\sim$ and conversely,
continuous isotone functions on $E/\!\!\sim$ can be lifted to
continuous isotone functions on $E$. As a consequence, many
properties are shared between $E$ and $E/\!\!\sim$ regarded as
topological preordered spaces (one should not apply this observation
carelessly, otherwise one would conclude that $\le$ is an order and
that $\mathscr{T}$ is Hausdorff). For instance, we have
\end{remark}

\begin{proposition}
If $E$ is a locally convex closed preordered space then $E/\!\!\sim$
is a locally convex closed ordered space.
\end{proposition}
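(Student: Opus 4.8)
The plan is to chain together the results already established in this section. First I would invoke Remark~\ref{dks}, which asserts that $(E,\mathscr{T},\le)$ is a closed preordered space if and only if $(E/\!\!\sim,\mathscr{T}/\!\!\sim,\lesssim)$ is a closed ordered space; since $E$ is assumed closed preordered, the quotient is a closed ordered space. It remains to transfer the local convexity to the quotient. For this I would appeal to the Remark immediately following Remark~\ref{dks}: a set $S\subset E$ is convex if and only if $\pi(S)$ is convex. Combined with the fact (Proposition~\ref{gvh} and Remark~\ref{knx}) that under local convexity $\pi$ establishes a bijection between the open sets of $E$ and those of $E/\!\!\sim$ and preserves the property of being a neighborhood of a point (indeed every open neighborhood of $x$ is an open neighborhood of $[x]$), the local convexity at each $x\in E$ passes to local convexity at each $[x]\in E/\!\!\sim$.

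In more detail, the argument for local convexity on the quotient would run as follows. Let $[x]\in E/\!\!\sim$ and let $W$ be an open neighborhood of $[x]$ in $E/\!\!\sim$. Then $O:=\pi^{-1}(W)$ is an open neighborhood of $x$ in $E$. By local convexity of $E$ at $x$, there is a convex neighborhood $C$ of $x$ with $x\in C\subset O$; shrinking, we may take $C$ of the form $U\cap V$ with $U$ open decreasing and $V$ open increasing. Then $\pi(C)=\pi(U)\cap\pi(V)$ by the Remark quoted above, and by Remark~\ref{dks} $\pi(U)$ is open decreasing and $\pi(V)$ is open increasing, so $\pi(C)$ is an open convex set. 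Since $O$ is saturated (Proposition~\ref{gvh}) we have $\pi(C)\subset\pi(O)=W$, and since $\pi$ is open $\pi(C)$ is a neighborhood of $[x]$ containing $[x]=\pi(x)$. Hence the convex neighborhoods of $[x]$ form a base at $[x]$, i.e. $E/\!\!\sim$ is locally convex.

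I expect no serious obstacle here: every ingredient has been prepared in the preceding remarks and in Proposition~\ref{gvh}. The one point requiring a little care is the reduction to a convex neighborhood of the special form $U\cap V$ — but this is exactly the content of the definition of convex set (intersection of an increasing and a decreasing set) combined with the definition of local convexity, which furnishes an open decreasing $U$ and an open increasing $V$ with $x\in U\cap V\subset O$; one should note that "$C$ convex neighborhood" and the existence of such $U,V$ are being used interchangeably, which is legitimate since $x\in U\cap V\subset C$ already gives a convex \emph{open} neighborhood inside any prescribed $O$. A reader might also want the symmetric observation that convex neighborhoods on the quotient lift back to convex neighborhoods on $E$, but that direction is not needed for the stated implication.

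\begin{proof}
By Remark \ref{dks}, since $E$ is a closed preordered space, $E/\!\!\sim$ is a closed ordered space. It remains to prove local convexity. Let $[x]\in E/\!\!\sim$ and let $W$ be an open neighborhood of $[x]$. Then $O=\pi^{-1}(W)$ is an open neighborhood of $x$. By local convexity of $E$ at $x$ there are an open decreasing set $U$ and an open increasing set $V$ with $x\in U\cap V\subset O$. By the remark preceding Proposition \ref{gvh}, $\pi(U\cap V)=\pi(U)\cap\pi(V)$, and by Remark \ref{dks} the set $\pi(U)$ is open decreasing and $\pi(V)$ is open increasing; hence $\pi(U\cap V)$ is an open convex neighborhood of $[x]$. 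By Proposition \ref{gvh} the open set $O$ is saturated, so $\pi(U\cap V)\subset\pi(O)=W$. Thus the open convex neighborhoods of $[x]$ form a base at $[x]$, and $E/\!\!\sim$ is locally convex.
\end{proof}
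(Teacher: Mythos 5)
There is a genuine gap, and it is precisely at the point the paper considers the real content of this proposition. You cite Remark~\ref{dks} for the claim that $E$ closed preordered implies $E/\!\!\sim$ closed ordered, but Remark~\ref{dks} does not say this: it covers the \emph{normally}, \emph{semiclosed} and \emph{regularly} preordered properties, all of which are formulated purely in terms of open/closed increasing and decreasing subsets of $E$ and therefore transfer through the bijection of such sets under $\pi$. Closedness of the graph $G(\le)$ is a property of $E\times E$, and the paper explicitly warns, right after this proposition, that ``the property of closure for the graph of the preorder does not pass to the quotient without additional assumptions.'' Local convexity is exactly the additional assumption that makes it work, and this is the part that requires an argument. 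The paper's proof supplies one via Nachbin's characterization of closed preorders: if $[x]\not\lesssim[y]$ then $x\nleq y$, so there are open sets $U_x\ni x$, $U_y\ni y$ with $i(U_x)\cap d(U_y)=\emptyset$; by local convexity (Prop.~\ref{gvh}, saturation of open sets) these project to an increasing neighborhood of $[x]$ and a disjoint decreasing neighborhood of $[y]$, whence $\lesssim$ is closed. (An alternative, also in the paper, identifies $\pi\times\pi$ with the Kolmogorov quotient of $E\times E$, which is a closed map.) Your proof omits any such argument, so the ``closed ordered'' half of the conclusion is unproven.

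A secondary, fixable slip: in the local convexity part you extract from local convexity an \emph{open} decreasing $U$ and an \emph{open} increasing $V$ with $x\in U\cap V\subset O$. That is the definition of convexity of the space, not of local convexity; local convexity only provides a convex neighborhood $C=I\cap D$ with $I$ increasing and $D$ decreasing, neither necessarily open. The argument still goes through with such a $C$: $\pi(C)=\pi(I)\cap\pi(D)$ is convex, it is a neighborhood of $[x]$ because $C$ contains a saturated open set around $x$ whose image under the open map $\pi$ is open, and $\pi(C)\subset\pi(O)=W$. This is essentially the content of Remark~\ref{knx} and Proposition~\ref{kyk}, which is how the paper disposes of the local convexity half; but as written your reduction to open $U,V$ is not justified by the hypothesis.
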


\begin{proof}
We just prove closure to show how the argument works. If
$[x]\not\lesssim [y]$ then $x\nleq y$. The representatives $x$ and
$y$ are separated by open sets \cite[Prop. 1, Chap. 1]{nachbin65}
$U_x$ and $U_y$ such that $i(U_x)\cap d(U_y)=\emptyset$. By local
convexity the increasing neighborhood of $x$, $i(U_x)$, projects
into an increasing neighborhood $\pi(i(U_x))$ of $[x]$. Analogously,
$\pi(d(U_y))$ is a decreasing neighborhood of $[y]$ which is
disjoint from $\pi(i(U_x))$. We conclude that $\lesssim$ is closed
\cite[Prop. 1, Chap. 1]{nachbin65}.
\end{proof}
%
%
%

The property of closure for the graph of the preorder does not pass
to the quotient without additional assumptions \cite{kunzi05}. For
instance, the previous result holds with ``locally convex'' replaced
by $k_\omega$-space \cite{minguzzi11f}.

\begin{remark}
In a topological space $(E,\mathscr{T})$ the {\em specialization
preorder} is defined by $x\preceq y$ if $\overline{x}\subset
\overline{y}$. Two points $x,y$ are indistinguishable according to
the topology if $x\preceq y$ and $y\preceq x$, denoted $x\simeq y$,
since in this case they have the same neighborhoods. The quotient
under $\simeq$ of the topological space is called {\em Kolmogorov
quotient} or {\em $T_0$-identification} and gives a $T_0$-space,
sometimes called the {\em  $T_0$-reflection} of $E$. The Kolmogorov
quotient is by construction open, closed and a quasi-homeomorphism.

The first statement of proposition \ref{gvh} implies that under
local convexity if $x\sim y$ then $x$ and $y$ have the same
neighborhoods, that is, $x\simeq y$. If the preorder $\le$ on $E$ is
semiclosed the converse holds because
$\overline{y}=\overline{x}\subset i(x)\cap d(x)$, which implies,
$y\sim x$. Thus in a locally convex semiclosed preordered space,
$\pi$ is the { Kolmogorov quotient}  and $E/\!\!\sim$ is the
$T_0$-identified space. Actually, $E/\!\!\sim$ is a $T_1$-space
because it is a semiclosed ordered space (remark \ref{dks}) thus
$\overline{[x]}\subset i_{E/\!\!\sim}([x])\cap
d_{E/\!\!\sim}([x])=\{[x]\}$. If additionally $E$ is a closed
preordered space we already know that $E/\!\!\sim$ is a closed
ordered space.

%
%
%

Another way to prove that $\lesssim$ is closed is to observe that
the $T_0$-reflection of a product is the product of the
$T_0$-reflections, that is, $\pi\times \pi$ is the Kolmogorov
quotient of $E\times E$, and since the Kolmogorov quotient is closed
it sends the closed graph $G(\le)$ into the graph $G(\lesssim)$
which is therefore closed. In summary we have proved
\end{remark}
%
%

\begin{proposition} \label{jsk}
Let $E$ be a locally convex semiclosed preordered space then $\pi:
E\to E/\!\!\sim$ is the $T_0$-identification of $E$ and $E/\!\!\sim$
is $T_1$. Furthermore, if $E$ is also a closed preordered space then
$E/\!\!\sim$ is a closed ordered space and hence $T_2$.
\end{proposition}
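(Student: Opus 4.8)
The plan is to prove Proposition~\ref{jsk} by assembling the pieces that have already been established in the preceding remarks, rather than by a fresh argument. Recall that we want three conclusions: (a) $\pi\colon E\to E/\!\!\sim$ is the $T_0$-identification of $E$; (b) $E/\!\!\sim$ is $T_1$; and (c) if $E$ is moreover a closed preordered space, then $E/\!\!\sim$ is a closed ordered space, hence $T_2$.

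\medskip

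First I would show that the equivalence relation $\sim$ coincides with the topological indistinguishability relation $\simeq$. The forward inclusion $x\sim y\Rightarrow x\simeq y$ follows from the first statement of Proposition~\ref{gvh}: local convexity at each point guarantees that $[x]=i(x)\cap d(x)$ is contained in every open neighborhood of $x$, so if $x\sim y$ then $y\in[x]$ lies in every neighborhood of $x$ and vice versa, i.e.\ $x$ and $y$ have the same neighborhoods. For the converse, suppose $x\simeq y$, so $\overline{x}=\overline{y}$. Since $E$ is semiclosed preordered, $i(x)$ and $d(x)$ are closed and contain $x$, hence $\overline{x}\subset i(x)\cap d(x)$; thus $y\in\overline{y}=\overline{x}\subset i(x)\cap d(x)$, which says exactly $x\le y$ and $y\le x$, i.e.\ $y\sim x$. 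Therefore $\sim\;=\;\simeq$, and $\pi$ is by definition the Kolmogorov quotient, establishing (a). In particular $E/\!\!\sim$ is $T_0$.

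\medskip

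Next, for (b), I would invoke Remark~\ref{dks}: because $E$ is semiclosed preordered, the quotient $E/\!\!\sim$ is a semiclosed \emph{ordered} space, so $i_{E/\!\!\sim}([x])$ and $d_{E/\!\!\sim}([x])$ are closed. Then $\overline{[x]}\subset i_{E/\!\!\sim}([x])\cap d_{E/\!\!\sim}([x])$, and since $\lesssim$ is a genuine order (antisymmetric) this intersection is the singleton $\{[x]\}$; hence every singleton is closed, i.e.\ $E/\!\!\sim$ is $T_1$. Finally for (c), if $E$ is additionally a closed preordered space then by Remark~\ref{dks} again $E/\!\!\sim$ is a closed ordered space (the stated equivalence: $E$ is a closed preordered space iff $E/\!\!\sim$ is a closed ordered space). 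A closed ordered space is in particular Hausdorff, since for $[x]\ne[y]$ antisymmetry of the order forces $[x]\not\lesssim[y]$ or $[y]\not\lesssim[x]$, and in a closed ordered space non-related points can be separated by disjoint open sets (one increasing, one decreasing) by \cite[Prop. 1, Chap. 1]{nachbin65}; hence $E/\!\!\sim$ is $T_2$.

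\medskip

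I do not anticipate a genuine obstacle here: the proposition is essentially a summary, and each step is a direct citation of Proposition~\ref{gvh}, Remark~\ref{dks}, or \cite{nachbin65}. The only point that needs a little care is the converse direction $x\simeq y\Rightarrow x\sim y$, where one must remember to use the \emph{semiclosed} hypothesis (it genuinely fails without it), and the passage from ``closed ordered space'' to ``$T_2$'', which uses Nachbin's separation of non-related points by open increasing/decreasing sets. Everything else is bookkeeping with the quotient map.
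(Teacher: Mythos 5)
Your steps (a) and (b) reproduce the paper's own argument: $x\sim y\Rightarrow x\simeq y$ via the first statement of Proposition~\ref{gvh}, the converse via $\overline{x}\subset i(x)\cap d(x)$ using semiclosedness, and $T_1$ via Remark~\ref{dks} plus antisymmetry of $\lesssim$. These parts are correct and match the paper.

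Step (c), however, contains a genuine gap. You justify ``$E$ closed preordered $\Rightarrow E/\!\!\sim$ closed ordered'' by citing Remark~\ref{dks} as if it stated this equivalence. It does not: that remark covers only the properties \emph{normally preordered}, \emph{semiclosed preordered} and \emph{regularly preordered}, and the paper warns explicitly (citing \cite{kunzi05}) that closedness of the graph does \emph{not} pass to the quotient without additional assumptions. This is exactly the point where local convexity must be invoked a second time, and the paper does so in the unnumbered proposition following Remark~\ref{knx}: if $[x]\not\lesssim[y]$ then $x\nleq y$, so by Nachbin's characterization of closed preorders there are open sets $U_x\ni x$ and $U_y\ni y$ with $i(U_x)\cap d(U_y)=\emptyset$; since $\pi$ is open and saturating (Prop.~\ref{gvh}), $\pi(i(U_x))$ and $\pi(d(U_y))$ are disjoint increasing and decreasing neighborhoods of $[x]$ and $[y]$, so $\lesssim$ is closed. (The paper also offers the alternative: $\pi\times\pi$ is the Kolmogorov quotient of $E\times E$, hence a closed map, and it sends the closed graph $G(\le)$ onto $G(\lesssim)$.) Without one of these arguments your proof of (c) does not go through; the remaining inference ``closed ordered $\Rightarrow T_2$'' via antisymmetry and Nachbin's separation is fine.
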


The next proposition will be useful (see Prop. \ref{juj}) and is an
immediate corollary of remark \ref{knx}.

\begin{proposition} \label{kyk}
If $(E,\mathscr{T},\le)$ is (locally) convex then
$(E/\!\!\sim,\mathscr{T}/\!\!\sim, \lesssim)$ is  (resp. locally)
convex. If $(E,\mathscr{T},\le)$ is locally convex and locally
compact then $(E/\!\!\sim,\mathscr{T}/\!\!\sim, \lesssim)$ is
locally compact, and if additionally $E$ is a closed preordered
space then every point of $E$ admits a base of closed compact
neighborhoods (but $E$ need not be $T_1$).
\end{proposition}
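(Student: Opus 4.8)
The plan is to deduce everything formally from Proposition \ref{gvh} and Remark \ref{knx}. Recall that under local convexity these say that every open set of $E$ is saturated with respect to $\pi$, that $\pi$ is an open, closed and proper (surjective) quasi-homeomorphism, and that $\pi$ induces bijections between the families of open, closed, compact, increasing, decreasing and convex subsets of $E$ and of $E/\!\!\sim$; moreover, by the remark preceding Proposition \ref{gvh}, $\pi(U\cap V)=\pi(U)\cap\pi(V)$ for $U$ decreasing and $V$ increasing, and a set $S$ is convex if and only if $\pi(S)$ is convex.

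First I would treat the (local) convexity assertion. Let $\tilde O$ be an open neighborhood of $[x]$ in $E/\!\!\sim$; then $O:=\pi^{-1}(\tilde O)$ is an open (hence saturated) neighborhood of $x$, and $\pi(O)=\tilde O$ by surjectivity. If $E$ is convex, pick an open decreasing set $U$ and an open increasing set $V$ with $x\in U\cap V\subset O$; by Remark \ref{dks} the images $\pi(U),\pi(V)$ are open decreasing and open increasing respectively, and $[x]\in\pi(U)\cap\pi(V)=\pi(U\cap V)\subset\pi(O)=\tilde O$, so $E/\!\!\sim$ is convex. For local convexity one starts instead from a convex neighborhood $C\subset O$ of $x$ (available by local convexity of $E$): its interior is open, hence saturated, so $\pi(C)$ contains the open set $\pi(\mathrm{int}\,C)\ni[x]$ and is therefore a neighborhood of $[x]$; it is convex because $C$ is, and it satisfies $\pi(C)\subset\pi(O)=\tilde O$ by saturation of $O$. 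Thus the convex neighborhoods of $[x]$ form a base, and $E/\!\!\sim$ is locally convex. The only subtlety here is precisely the need to pass through an auxiliary open (saturated) set to certify that the projected convex set is a genuine neighborhood of $[x]$.

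Next, assume in addition that $E$ is locally compact. Given $[x]$, choose a compact neighborhood $K$ of $x$ and an open set $O$ with $x\in O\subset K$; then $\pi(K)$ is compact (continuous image) and contains the open set $\pi(O)\ni[x]$ (again $O$ is saturated), so it is a compact neighborhood of $[x]$, proving $E/\!\!\sim$ locally compact. Finally, if $E$ is also a closed preordered space, then by Proposition \ref{jsk} the space $E/\!\!\sim$ is locally compact and $T_2$, hence at each of its points it has a neighborhood base of closed compact sets (the standard fact that in a locally compact Hausdorff space every open set containing a point contains a compact closed neighborhood of it). To transport this base up to $E$, I would use that $\pi$ is continuous and, crucially, proper: given $x$ and an open neighborhood $N$, pick an open (saturated) $O$ with $x\in O\subset N$, then a closed compact neighborhood $\tilde C$ of $[x]$ with $\tilde C\subset\pi(O)$; the set $\pi^{-1}(\tilde C)$ is closed by continuity, compact by properness, a neighborhood of $x$ since $\tilde C$ is a neighborhood of $[x]$, and $\pi^{-1}(\tilde C)\subset\pi^{-1}(\pi(O))=O\subset N$. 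Hence every point of $E$ has a base of closed compact neighborhoods, while $E$ itself need not be $T_1$ whenever $\sim$ is nontrivial. The main (and only) point requiring attention is this last transfer: it is exactly properness of $\pi$ (from Proposition \ref{gvh}) that makes $\pi^{-1}(\tilde C)$ compact; the rest is a routine consequence of Remark \ref{knx}.
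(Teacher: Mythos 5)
Your proof is correct and follows exactly the route the paper intends: the paper offers no written proof, declaring the proposition ``an immediate corollary of remark \ref{knx}'', and your argument is precisely the careful expansion of that remark together with Proposition \ref{gvh} (saturation of open sets, openness and properness of $\pi$) and Proposition \ref{jsk} (Hausdorffness of $E/\!\!\sim$ in the closed preordered case). All steps check out, including the two genuinely delicate points you flag: certifying that projected convex/compact sets are neighborhoods via an auxiliary saturated open set, and using properness of $\pi$ to pull the closed compact base back up to $E$.
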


\section{Equivalence of some topological assumptions}
We wish to clarify the relative strength of some topological
conditions that can be used in a Levin's type theorem.

 Let us recall that a {\em
Polish space} is a topological space which is homeomorphic to a
separable complete metric space \cite[Part II, Chap. IX, Sect.
6]{bourbaki66}. A {\em pseudo-metric} is a metric for which the
condition $d(x,y)=0 \Rightarrow x=y$, has been dropped
\cite{kelley55}. The relation $x \thickapprox y $ if $d(x,y)=0$, is
an equivalence relation and the quotient $E/\!\thickapprox$ is a
metric space.

A pseudo-metrizable space is a topological space with a topology
which comes from some pseudo-metric. In particular, it is Hausdorff
if and only if it is metrizable because the Hausdorff property holds
if and only if the equivalence classes are trivial. We say that a
space is a pseudo-Polish space if it is homeomorphic to a
pseudo-metric space and the quotient under $\thickapprox$ is a
Polish space. Note that every pseudo-Polish space is separable.

The next result is purely topological (see Prop. \ref{jsk}) but at
some places it makes reference to a preorder. This is done because
it is meant to clarify the topological conditions underlying a
Levin's type theorem in which the presence of a closed preorder is
included in  the assumptions.

\begin{proposition} \label{juj}
Let us consider the following properties for  a topological space
$(E,\mathscr{T})$ and let $\le$ be any preorder on $E$ (e.g. the
discrete-order)
\begin{itemize}
\item[(i)] second-countable $k_\omega$-space,
\item[(ii)] second-countable locally compact,
\item[(iii)] pseudo-metrizable hemicompact $k$-space,
\item[(iv)] locally compact pseudo-Polish space.
\end{itemize}
Then $(iv)\Leftrightarrow (iii) \Rightarrow (ii) \Rightarrow (i)$.
Furthermore,  if $(E,\mathscr{T},\le)$ is a locally convex
semiclosed preordered space we have $(i) \Rightarrow (ii)$, and if
$(E,\mathscr{T},\le)$ is a locally convex closed preordered space we
have $(ii) \Rightarrow (iii)$ (note that the discrete-order is
locally convex thus the former implication holds also under $T_1$
separability of $\mathscr{T}$ and the latter implication holds also
under $T_2$ separability of $\mathscr{T}$). In particular they are
all equivalent if $(E,\mathscr{T},\le)$ is a locally convex closed
preordered space (e.g. under Hausdorffness).
\end{proposition}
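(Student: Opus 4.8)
The plan is to establish the cycle of implications by treating the "topological-only" implications $(iv)\Leftrightarrow(iii)\Rightarrow(ii)\Rightarrow(i)$ first, and then the two "preorder-assisted" converses $(i)\Rightarrow(ii)$ and $(ii)\Rightarrow(iii)$ by passing to the ordered quotient $E/\!\!\sim$ and invoking Proposition~\ref{jsk}. For the equivalence $(iii)\Leftrightarrow(iv)$, I would argue as follows. A pseudo-metrizable space is first countable, so by the facts recalled after Definition~\ref{hem} a hemicompact such space is locally compact; conversely a locally compact pseudo-metric space whose $\thickapprox$-quotient is a (separable, completely metrizable) Polish space is hemicompact, because local compactness plus the Lindel\"of property (which a separable pseudo-metric space has, separability being inherited both ways through the isometric quotient) gives hemicompactness, and it is a $k$-space since it is first countable. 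The delicate point here is to check that completeness of the quotient metric can be recovered from, and is equivalent to, the combination of local compactness and hemicompactness on the pseudo-metric side: one direction uses that a locally compact hemicompact metric space is $\sigma$-compact hence can be metrized completely (a $\sigma$-compact locally compact metrizable space is completely metrizable), and the other that in a complete separable metric space hemicompactness is equivalent to local compactness. So I expect $(iii)\Leftrightarrow(iv)$ to reduce to standard metric-space facts applied to the $\thickapprox$-quotient.

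For $(iii)\Rightarrow(ii)$: a pseudo-metrizable hemicompact $k$-space is first countable, and (again by the cited Arens-type fact) a first countable hemicompact space is locally compact; second countability follows because a separable pseudo-metric space is second countable (a separable metric space is second countable, and the isometric quotient $E\to E/\!\thickapprox$ transports a countable base back and forth). For $(ii)\Rightarrow(i)$: a second-countable locally compact space is Lindel\"of, hence hemicompact, and locally compact spaces are $k$-spaces, so by the chain of implications displayed in the preliminaries it is a $k_\omega$-space; second countability is retained trivially.

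The two converses are where the preorder enters. Assume $(i)$, i.e. $E$ is a second-countable $k_\omega$-space, and that $E$ is a locally convex semiclosed preordered space. By Proposition~\ref{jsk}, $\pi\colon E\to E/\!\!\sim$ is the $T_0$-identification; by Remark~\ref{knx} and Proposition~\ref{gvh}, $\pi$ is an open, closed, proper quasi-homeomorphism, so it transports the $k_\omega$-space structure and second countability to $E/\!\!\sim$, which is moreover $T_1$. A $T_1$ (indeed Hausdorff need not be assumed, but $T_1$ suffices here) second-countable $k_\omega$-space is a hemicompact $k$-space — this is exactly the point in the preliminaries that "the converse can be proved under $T_1$ separability," so $E/\!\!\sim$ is a second-countable hemicompact $k$-space, hence locally compact; now $\pi$ proper with $E/\!\!\sim$ locally compact pulls local compactness back to $E$ (preimages of compact neighborhoods are compact neighborhoods by Proposition~\ref{kyk}), giving $(ii)$. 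For $(ii)\Rightarrow(iii)$, assume additionally $E$ is a locally convex closed preordered space; then by Proposition~\ref{jsk} the quotient $E/\!\!\sim$ is Hausdorff, and being second-countable and locally compact it is metrizable (Urysohn), hence $E$ is pseudo-metrizable by pulling the pseudo-metric back along $\pi$; it is a hemicompact $k$-space because $\pi$ transports these properties. I expect the main obstacle to be bookkeeping: making sure every structural property (hemicompactness, the $k$-space property, second countability, local compactness, metrizability) genuinely passes in the required direction along the quasi-homeomorphism $\pi$, for which Remark~\ref{mbo}, Remark~\ref{knx}, Proposition~\ref{gvh} and Proposition~\ref{kyk} supply exactly the needed transport lemmas, and to correctly account for the parenthetical remark that the discrete order is locally convex so that the $T_1$/$T_2$ hypotheses on $\mathscr{T}$ alone suffice when one specializes $\le$ to the discrete order.
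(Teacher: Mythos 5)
Your overall architecture coincides with the paper's: the purely topological implications are handled by the standard chain compact $\Rightarrow$ hemicompact $k$-space $\Rightarrow$ $k_\omega$ $\Rightarrow$ $\sigma$-compact, the Arens fact (first countable $+$ hemicompact $\Rightarrow$ locally compact), and the one-point-compactification/Polish argument on the $\thickapprox$-quotient; the two preorder-assisted converses go through $E/\!\!\sim$ using Propositions \ref{gvh}, \ref{jsk} and the properness of $\pi$, exactly as in the paper (the paper pulls back the admissible hemicompact sequence rather than local compactness directly, but these are interchangeable).

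There is, however, one concrete gap: in your argument for $(iii)\Rightarrow(ii)$ you write that ``second countability follows because a separable pseudo-metric space is second countable,'' but nothing in hypothesis $(iii)$ gives you separability of $E$ --- that is precisely what has to be proved, and it is the only non-routine content of this implication. The paper closes it as follows: hemicompactness gives $\sigma$-compactness, every compact pseudo-metrizable space is second countable and hence separable, so $E$ is a countable union of separable subspaces and therefore separable; only then does ``separable pseudo-metrizable $\Rightarrow$ second countable'' apply. (You do implicitly use the same ingredient inside your $(iii)\Rightarrow(iv)$ sketch, via ``$\sigma$-compact locally compact metrizable $\Rightarrow$ completely metrizable,'' so the fix is consistent with the rest of your plan.) Two smaller points: Proposition \ref{kyk} states that local compactness passes \emph{to} the quotient, not back from it, so for the pull-back step in $(i)\Rightarrow(ii)$ you should instead invoke properness of $\pi$ from Proposition \ref{gvh} (preimages of compact neighborhoods are compact neighborhoods); and the transport of the $k_\omega$ property to $E/\!\!\sim$, which you attribute to $\pi$ being a quasi-homeomorphism, is justified in the paper by a non-Hausdorff Morita-type theorem from the author's earlier work --- under local convexity your route works, but it deserves an explicit argument rather than the blanket word ``transports.''
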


\begin{proof}
We shall make extensive use of results recalled in the introduction.

(ii) $\Rightarrow$ (i). Every second countable locally compact space
is an hemicompact $k$-space and hence a $k_\omega$-space.

(i) $\Rightarrow$ (ii). Assume that $(E, \mathscr{T},\le)$ is a
locally convex semiclosed preordered space. If we prove that $E$ is
hemicompact we have finished because first countability and
hemicompactness imply local compactness. We have already proved that
$(E/\!\!\sim,\mathscr{T}/\!\!\sim)$ is $T_1$ (Prop. \ref{jsk}). But
$(E/\!\!\sim,\mathscr{T}/\!\!\sim)$ is a $k_\omega$-space by a
non-Hausdorff generalization of Morita's theorem \cite{minguzzi11f}
thus $E/\!\!\sim$ is hemicompact \cite[Lemma 9.3]{steenrod67}. Let
$\tilde{K}_i$ be an admissible sequence on $E/\!\!\sim$, since $\pi$
is proper (Prop. \ref{gvh}) the sets $K_i=\pi^{-1}(\tilde{K}_i)$ are
compact. They give an admissible sequence for the hemicompact
property, indeed if $K$ is any compact on $E$ then $\pi(K)$ is
compact on $E/\!\!\sim$ thus there is some $\tilde{K}_i$ such that
$\pi(K)\subset \tilde{K}_i$. Finally, $K\subset
\pi^{-1}(\pi(K))\subset \pi^{-1}(\tilde{K}_i)= K_i$.


(ii) $\Rightarrow$ (iii). A second countable locally compact space
is an hemicompact $k$-space.
Since $(E, \mathscr{T},\le)$ is a locally convex closed preordered
space, $E/\!\!\sim$ is Hausdorff (Prop. \ref{jsk}).
 Local convexity, local compactness,  and second countability
pass to the quotient $E/\!\!\sim$ (see Prop. \ref{gvh},\ref{kyk})
which is therefore metrizable by Urysohn's theorem. Thus $E$ is
pseudo-metrizable with the pullback by $\pi$ of the metric on
$E/\!\!\sim$.

(iii) $\Rightarrow$ (ii).  A pseudo-metrizable space is second
countable if and only if it is separable \cite[Theor. 11 Chap.
4]{kelley55} thus it suffices to prove separability. In particular,
since $E$ is $\sigma$-compact it suffices to prove separability on
each compact set $K_n$ (of the hemicompact decomposition) with the
induced topology (which comes from the induced pseudo-metric). It is
known that every compact pseudo-metrizable space is second countable
\cite[Theor. 5, Chap. 5]{kelley55} and hence separable, thus we
proved that $E$ is second countable. As first countability and the
hemicompact property imply local compactness we get the thesis.


(iv) $\Rightarrow$ (iii). $E$ is a separable pseudo-metrizable space
thus second countable \cite[Theor. 11 Chap. 4]{kelley55}. Second
countability and local compactness imply the hemicompact $k$-space
property.

(iii) $\Rightarrow$ (iv). Since (iii) $\Rightarrow$ (ii), $E$ is
second countable and locally compact. Let $d$ be a compatible
pseudo-metric on $E$ and let $E/\!\thickapprox$ be the metric
quotient. Since $\pi_{\thickapprox}: E \to E/\!\thickapprox$ is an
open continuous map (actually a quasi-homeomorphism) and $E$ is
second countable and locally compact then $E/\!\thickapprox$ is
second countable and locally compact too.
%
%
We conclude by \cite[23C]{willard70} that the one point
compactification of $E/\!\thickapprox$ is metrizable, and by
compactness the one point compactification of $E/\!\thickapprox$ is
completely metrizable. Further, since $E/\!\thickapprox$ is
separable its one point compactification is also separable. We
conclude that the one point compactification of $E/\!\thickapprox$
is Polish, and since $E/\!\thickapprox$ is Hausdorff and locally
compact, $E/\!\thickapprox$ is an open subset of a Polish space
hence Polish \cite[Part II, Chap. IX, Sect. 6]{bourbaki66}.
\end{proof}

\begin{remark}
If $E$ is a locally convex closed preordered space and (iv) holds,
as is implied by the assumptions of corollary \ref{bsc}, then the
local compactness mentioned in (iv) implies, despite the lack of
Hausdorffness, the stronger versions of local compactness (Prop.
\ref{kyk}).
\end{remark}

\section{Conclusions}
We have deduced an improved version of Levin's theorem in which the
Hausdorff condition has been removed. Furthermore, some alterative
topological assumptions underlying a Levin's type theorem have been
compared and we  we have shown that the original Levin's theorem
included a Polish space assumption.

\section*{Acknowledgments}
The material of this work was initially contained in a first
expanded version of \cite{minguzzi11f}, and it has benefited from
suggestions by an anonymous referee in connection with Prop.
\ref{jsk}.


\end{document}